\newcommand{\Leinartas}{Le{\u\i}nartas\ }
\newcommand{\Leinartass}{Le{\u\i}nartas's\ }
\newcommand{\hl}[1]{\textbf{#1}}
\newcommand{\Kbar}{\overline{K}}
\newcommand{\CC}{\mathbb{C}}
\newcommand{\QQ}{\mathbb{Q}}
\newcommand{\QQbar}{\overline{\mathbb{Q}}}
\newcommand{\NN}{\mathbb{N}}
\newtheoremstyle{plain} 
     {2ex}              
     {2ex}              
     {}                 
     {}                 
     {\bfseries}        
     {}                 
     {1ex}              
     {\thmname{#1 }\thmnumber{#2}\thmnote{ \normalfont{(#3)}}.
}
\theoremstyle{plain}
\newtheorem{definition}{Definition}[section]
\newtheorem{remark}[definition]{Remark}
\newtheorem{lemma}[definition]{Lemma}
\newtheorem{theorem}[definition]{Theorem}
\newtheorem{corollary}[definition]{Corollary}
\newtheorem{example}[definition]{Example}
\numberwithin{equation}{section}
\begin{document}
\title{Le{\u\i}nartas's Partial Fraction Decomposition}
\author{Alexander Raichev}
\address{
    Department of Computer Science \\
    University of Auckland \\ 
    Auckland 1001, New Zealand
}
\email{\href{mailto:raichev@cs.auckland.ac.nz}{raichev@cs.auckland.ac.nz}}
\date{25 June 2012}
\subjclass{13P99, 68W30}
\keywords{Partial fraction decomposition}
\begin{abstract}
These notes describe \Leinartass algorithm for multivariate partial fraction decompositions and employ an implementation thereof in Sage.
\end{abstract}
\maketitle

\section{Introduction}

In \cite{Lein1978}, \Leinartas gave an algorithm for decomposing multivariate rational expressions into partial fractions.
In these notes I re-present \Leinartass algorithm, because it is not well-known, because its English translation \cite{Lein1978} is difficult to find, and because it is useful e.g. for computing residues of multivariate rational functions; see \cite[Chapter 3]{AiYu1983} and \cite{RaWi2012}.

Along the way I include examples that employ an open-source implementation of \Leinartass algorithm that I wrote in Sage \cite{Sage}.
The code can be downloaded from \href{http://www.alexraichev.org/research.html}{my website} and is currently under peer review for incorporation into the Sage codebase.

For a different type of multivariate partial fraction decomposition, one that uses iterated univariate partial fraction decompositions, see \cite{Stou2008}. 
\section{Algorithm}

Henceforth let $K$ be a field and $\Kbar$ its algebraic closure.
We will work in the factorial polynomial rings $K[X]$ and $\Kbar[X]$, where $X = X_1,\ldots, X_d$ with $d \ge 1$.
\Leinartass algorithm is contained in the constructive proof of the following theorem, which is \cite[Theorem 1]{Lein1978}\footnote{
\Leinartas used $K = \CC$, but that is an unnecessary restriction.
By the way, \Leinartass article contains typos in equation (c) on the second page, equation (b) on the third page, and the equation immediately after equation (d) on the third page: the right sides of those equations should be multiplied by $P$.
}.

\begin{theorem}[\Leinartas decompositon]\label{leinartas-decomp}
Let $f = p/q$, where $p, q \in K[X]$. 
Let $q = q_1^{e_1} \cdots q_m^{e_m}$ be the unique factorization of $q$ in $K[X]$, and let $V_i = \{x \in \Kbar^d : q_i(x) = 0 \}$, the algebraic variety of $q_i$ over $\Kbar$.

The rational expression $f$ can be written in the form
\[
    f = \sum_A \frac{p_A}{\prod_{i \in A} q_i^{b_i}},   
\]
where the $b_i$ are positive integers (possibly greater than the $e_i$), the $p_A$ are polynomials in $K[X]$ (possibly zero), and the sum is taken over all subsets $A \subseteq \{1,\ldots, m\}$ such that $\cap_{i \in A} V_i \neq \emptyset$ and $\{q_i : i \in A\}$ is algebraically independent (and necessarily $|A| \le d$). 
\end{theorem}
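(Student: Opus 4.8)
The plan is to prove that the set $M$ of rational expressions admitting a decomposition of the displayed form is a $K[X]$-submodule of $K(X)$ containing $1$: it is closed under addition (merge the two sums, combine terms with the same index set $A$ over a common denominator, add numerators) and under multiplication by polynomials (scale the $p_A$), and $1$ is realised by $A=\emptyset$. Granting this, since $f = p\cdot\frac{1}{q_1^{e_1}\cdots q_m^{e_m}}$, it suffices to show $\frac{1}{\prod_{i\in S}q_i^{n_i}}\in M$ for every $S\subseteq\{1,\dots,m\}$ and all integers $n_i\ge 1$, and I would do this by induction on $|S|$. The base case $S=\emptyset$ is the term $1$ with $A=\emptyset$: the intersection $\bigcap_{i\in\emptyset}V_i=\Kbar^d$ is nonempty and the empty family is vacuously algebraically independent.

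For the inductive step I distinguish three cases. \emph{(i)} If $\bigcap_{i\in S}V_i=\emptyset$, then the weak Nullstellensatz — which holds over an arbitrary field $K$ with algebraic closure $\Kbar$, via Zariski's lemma, so that an ideal of $K[X]$ with no zero in $\Kbar^d$ is the unit ideal — gives $1=\sum_{i\in S}h_iq_i$ with $h_i\in K[X]$. Raising this to the power $N=\sum_{i\in S}n_i$ and expanding, every monomial of $(\sum_{i\in S}h_iq_i)^N$ is divisible by $q_j^{n_j}$ for some $j\in S$, since otherwise its total degree in the $q_i$ would be at most $N-|S|<N$. Grouping monomials by such a $j$ exhibits $\frac{1}{\prod_{i\in S}q_i^{n_i}}$ as a $K[X]$-linear combination of expressions $\frac{1}{\prod_{i\in S\setminus\{j\}}q_i^{n_i}}$, each in $M$ by the inductive hypothesis.

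\emph{(ii)} If $\bigcap_{i\in S}V_i\neq\emptyset$ but $\{q_i:i\in S\}$ is algebraically dependent, pick a nonzero polynomial $R$ in indeterminates $(Y_i)_{i\in S}$ with $R((q_i)_{i\in S})=0$, and divide out the largest monomial dividing all of its terms (this is again a relation, since $\Kbar[X]$ is a domain). Evaluating at a point of $\bigcap_{i\in S}V_i$ shows the constant term of $R$ is zero, so every monomial of $R$ involves some $q_i$. I would then select a monomial $q^\beta:=\prod_{i\in S}q_i^{\beta_i}$ of $R$ whose variable set is a proper subset $T\subsetneq S$ and which is ``isolated'', in the sense that every other monomial of $R$ involves some $q_i$ with $i\notin T$. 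Writing $q^\beta=H$, where $H$ is a constant times the remaining part of the relation, we get $q^{M\beta}=H^M$ for every $M\ge 1$, and every monomial of $H^M$ has total degree at least $M$ in $\{q_i:i\notin T\}$. Multiplying $\frac{1}{\prod_{i\in S}q_i^{n_i}}$ by $q^{M\beta}/q^{M\beta}$, replacing the numerator $q^{M\beta}$ by $H^M$, and expanding, for $M$ large (relative to the $n_i$ and $|S|$) a pigeonhole estimate forces, in each resulting term, some $q_i$ with $i\notin T$ to cancel out of the denominator entirely; hence every term has index set a proper subset of $S$, and the inductive hypothesis applies. \emph{(iii)} Otherwise $\{q_i:i\in S\}$ is algebraically independent and $\bigcap_{i\in S}V_i\neq\emptyset$, so $\frac{1}{\prod_{i\in S}q_i^{n_i}}$ already has the desired shape, and $|S|\le d$ because $K(X)$ has transcendence degree $d$ over $K$.

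Termination is built into the induction, since in cases (i) and (ii) every index set produced is a proper subset of $S$. The main obstacle I anticipate is the selection of the isolated monomial $q^\beta$ in case (ii): one wants a relation, and a monomial in it, whose variable set is proper and is not ``covered'' by the other monomials. I expect this to follow by choosing $R$ with the fewest monomials (after the monomial normalisation above) and then a short combinatorial argument about the variable sets occurring in such an $R$. Everything else — the module structure of $M$, the Nullstellensatz bookkeeping in (i), and the degree estimates in (ii) — is routine.
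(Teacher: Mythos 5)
Your overall architecture is sound and is organized genuinely differently from the paper's: the paper first applies a ``Nullstellensatz decomposition'' to $f$ as a whole and then an ``algebraic dependence decomposition'' to each resulting summand, whereas you run a single induction on the support $S$ of one fraction $1/\prod_{i\in S}q_i^{n_i}$ and treat both obstructions as cases of the inductive step. The module reduction, case (i) (where the $N$-th power expansion is a tidy replacement for the paper's iteration), and case (iii) are all fine. The gap is in case (ii), exactly where you flag uncertainty, and it is not a routine afterthought. The ``isolated'' monomial you need---a monomial $q^\beta$ of $R$ with support $T\subsetneq S$ such that no other monomial of $R$ has support contained in $T$---exists if and only if some inclusion-minimal support among the monomials of $R$ is achieved by exactly \emph{one} monomial. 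Nothing in your argument rules out a normalised relation in which every inclusion-minimal support is shared by two or more monomials (schematically, $c_1q_1+c_2q_1^2+c_3q_2+c_4q_2^2+\cdots=0$), and minimising the number of monomials does not obviously help: discarding or merging monomials of an annihilating polynomial generally destroys the property of being a relation, so there is no evident exchange argument. The natural fallback---take $\beta$ of minimal total degree instead---does not rescue the powering-up step either, since then a monomial of $H^M$ is only guaranteed total degree about $M\,\|\beta\|$ in the $q_i$, which does not force cancellation against the inflated denominator $\prod_{i\in T}q_i^{M\beta_i+n_i}\prod_{i\notin T}q_i^{n_i}$. So the inductive step in case (ii) is not established as written; the ``short combinatorial argument'' you defer is the actual content of this case.

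The paper sidesteps all of this, and you could graft its device into your induction directly. The key extra lemma is that if $\{q_i: i\in S\}$ is algebraically dependent then so is $\{q_i^{n_i}: i\in S\}$ for any positive exponents (the paper proves this via the Jacobian criterion; alternatively, each $q_i$ is algebraic over $K(q_i^{n_i})$, so the two fields have the same transcendence degree). Take an annihilating polynomial $g=\sum_\nu c_\nu Y^\nu$ over $K$ for $Q:=(q_i^{n_i})_{i\in S}$ and let $\alpha$ be an exponent with $c_\alpha\neq0$ of minimal total degree; then every other $\nu$ has $\nu_i\ge\alpha_i+1$ for some $i$, and
\[
\frac{1}{\prod_{i\in S}q_i^{n_i}}=\frac{1}{Q^{\mathbf{1}}}=\sum_{\nu\neq\alpha}\frac{-c_\nu}{c_\alpha}\cdot\frac{Q^{\nu}}{Q^{\alpha+\mathbf{1}}},\qquad \mathbf{1}=(1,\ldots,1),
\]
where in the $\nu$-th term the factor $Q_i^{\alpha_i+1}$ cancels entirely, so that term is a polynomial multiple of $1/\prod_{j}q_j^{m_j}$ with support a proper subset of $S$ and your induction applies. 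One application of one relation suffices: no isolated monomial, no pigeonhole, and (consistently with the fact that the new supports are subsets of $S$) the hypothesis $\cap_{i\in S}V_i\neq\emptyset$ is not even needed in this case.
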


Let us call a decomposition of the form above a \hl{\Leinartas decomposition}.
An immediate consequence of the theorem is the following.

\begin{corollary}
Every rational expression in $d$ variables can be represented as a sum of rational expressions each of whose denominators contains at most $d$ unique irreducible factors.    
\qed
\end{corollary}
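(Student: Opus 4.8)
The plan is to prove Theorem~\ref{leinartas-decomp}; the stated corollary is then immediate, since in the displayed decomposition each denominator $\prod_{i \in A} q_i^{b_i}$ involves exactly the $|A|$ distinct irreducible factors $q_i$ with $i \in A$, and $|A| \le d$ because $|A|$ algebraically independent elements of $K(X_1, \ldots, X_d)$ cannot exceed the transcendence degree $d$. So the content is in the theorem, which I would prove by starting from the single-term ``decomposition'' $f = p/q = p / \prod_i q_i^{e_i}$ and repeatedly applying two rewriting steps, each removing one of the two ways an index set $A$ can be inadmissible; all polynomials stay in $K[X]$ throughout.

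Step 1: Nullstellensatz reduction. If some term $p_A / \prod_{i \in A} q_i^{b_i}$ has $\bigcap_{i \in A} V_i = \emptyset$, then the ideal $(q_i : i \in A)$ of $K[X]$ cannot be proper: otherwise it lies in a maximal ideal $\mathfrak{m}$, and $K[X]/\mathfrak{m}$, being a field finitely generated as a $K$-algebra, is a finite extension of $K$ by Zariski's lemma and so embeds in $\Kbar$, which would give a common zero of the $q_i$ in $\Kbar^d$. Hence $1 = \sum_{i \in A} h_i q_i$ with $h_i \in K[X]$, and
\[
    \frac{p_A}{\prod_{i \in A} q_i^{b_i}}
    = \sum_{i \in A} \frac{p_A h_i}{q_i^{b_i - 1} \prod_{j \in A \setminus \{i\}} q_j^{b_j}} .
\]
Each summand has an index set contained in $A$, and a proper one when $b_i = 1$; writing $(|A|, \sum_{i \in A} b_i) \in \NN^2$ with the lexicographic order, this pair strictly decreases for every summand. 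Since the multiset extension of a well-founded order is well-founded, iterating Step 1 terminates in a decomposition with $\bigcap_{i \in A} V_i \neq \emptyset$ for every term. Because $B \subseteq A$ forces $\bigcap_{i \in B} V_i \supseteq \bigcap_{i \in A} V_i$, this property survives any later step that only shrinks index sets.

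Step 2: algebraic-dependence reduction. If a term $p_A / \prod_{i \in A} q_i^{b_i}$ has $\{q_i : i \in A\}$ algebraically dependent over $K$, pick a nonzero $P = \sum_\alpha c_\alpha Y^\alpha \in K[Y_i : i \in A]$ with $P\big((q_i)_{i \in A}\big) = 0$; dividing $P$ by the monomial $Y^\gamma$, $\gamma_i = \min\{\alpha_i : c_\alpha \neq 0\}$ (harmless since $K[X]$ is a domain), we may assume $P$ has at least two monomials and is divisible by no single $Y_i$. Selecting one monomial $Y^{\alpha^{(0)}}$ of $P$ and solving the relation for $q^{\alpha^{(0)}} := \prod_{i \in A} q_i^{\alpha^{(0)}_i}$, we get
\[
    \frac{p_A}{\prod_{i \in A} q_i^{b_i}}
    = \frac{p_A\, q^{\alpha^{(0)}}}{q^{\alpha^{(0)}} \prod_{i \in A} q_i^{b_i}}
    = -\frac{1}{c_{\alpha^{(0)}}} \sum_{\alpha \neq \alpha^{(0)}} \frac{c_\alpha\, p_A\, q^\alpha}{\prod_{i \in A} q_i^{b_i + \alpha^{(0)}_i}} ,
\]
and after cancelling each $q^\alpha$ against the denominator every summand has an index set contained in $A$, so Step 1's property is not disturbed. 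The mechanism is routine; the real difficulty --- which I expect to be the main obstacle --- is to choose $\alpha^{(0)}$ (and, if need be, to replace $P$ by a power of itself, or to interleave Step 2 with Step 1) so that this rewriting provably terminates, for a careless choice can leave the exponents, and hence $A$, unchanged. One wants a well-founded measure on terms that is positive whenever $\{q_i : i \in A\}$ is algebraically dependent and strictly decreases at each application; here I would follow \Leinartass particular selection of the monomial.

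Granting Step 2's termination, run Step 1 to exhaustion and then Step 2: the result is a decomposition in which every term's index set $A$ satisfies both $\bigcap_{i \in A} V_i \neq \emptyset$ and $\{q_i : i \in A\}$ algebraically independent (whence $|A| \le d$), with every $p_A \in K[X]$. This is the form asserted in Theorem~\ref{leinartas-decomp}, and the corollary follows at once.
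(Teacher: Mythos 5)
Your reduction of the corollary to Theorem~\ref{leinartas-decomp} is right (the bound $|A|\le d$ is exactly Lemma~\ref{algdep-cert}), and your Step~1 --- the Zariski-lemma derivation of a certificate over $K$ and the lexicographic/multiset termination argument --- matches the paper's Nullstellensatz decomposition in substance. But Step~2 contains a genuine gap, which you flag yourself: you never say how to choose the monomial $\alpha^{(0)}$, and without that choice the rewriting need not make progress. That choice is not a detail to be outsourced to the source; it is the entire content of Lemma~\ref{algdep-decomp}, and so of the theorem beyond the Nullstellensatz step.

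The missing argument has two ingredients. First, by Lemma~\ref{algdep-powers} (via the Jacobian criterion), algebraic dependence of $\{q_i : i\in A\}$ passes to the powers $\{q_i^{b_i} : i\in A\}$, so one takes the annihilating polynomial $g=\sum_{\nu\in S}c_\nu Y^\nu$ for $Q=(q_i^{b_i})_{i\in A}$ rather than for the $q_i$ themselves; this is what lets \emph{whole} irreducible factors, not single copies of them, cancel. Second, one selects $\alpha\in S$ of minimal total degree $\|\alpha\|=\sum_i\alpha_i$. Then every $\nu\in S\setminus\{\alpha\}$ has $\nu_i\ge\alpha_i+1$ for some $i$ (otherwise $\nu\le\alpha$ coordinatewise, and minimality of $\|\alpha\|$ would force $\nu=\alpha$), so in the term $-p_Ac_\nu Q^\nu/\bigl(c_\alpha Q^{\alpha+\mathbf{1}}\bigr)$ the denominator factor $q_i^{b_i(\alpha_i+1)}$ is wholly absorbed by $q_i^{b_i\nu_i}$ in the numerator. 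Hence every resulting term has a \emph{strictly} smaller index set, and induction on $|A|$ terminates Step~2. Your normalization of $P$ by $Y^\gamma$ does not substitute for this: it only ensures each variable is absent from some monomial, not that some variable vanishes from every non-selected term. With the minimal-degree selection (and the passage to powers) supplied, your argument coincides with the paper's.
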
 

Now for a constructive proof of the theorem.
It involves two steps: decomposing $f$ via the Nullstellensatz and then decomposing each resulting summand via algebraic dependence.
We need a few lemmas.

The following lemma is a strengthening of the weak Nullstellensatz and is proved in \cite[Lemma 3.2]{DLMM2008}. 

\begin{lemma}[Nullstellensatz certificate]\label{null-cert}
A finite set of polynomials $\{q_1, \ldots, q_m \} \subset K[X]$ has no common zero in $\Kbar^d$ iff there exist polynomials $h_1, \ldots, h_m \in K[X]$ such that
\[
    1 = \sum_{i=1}^m h_i q_i. 
\]   
Moreover, if $K$ is a computable field, then there is a computable procedure to check whether or not the $q_i$ have a common zero in $\Kbar^d$ and, if not, return the $h_i$. 
\qed
\end{lemma}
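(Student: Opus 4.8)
The plan is to establish the equivalence in both directions and then address the decision procedure. The backward implication is immediate: if $1 = \sum_{i=1}^{m} h_i q_i$ with $h_i \in K[X]$, the same identity holds in $\Kbar[X]$, so evaluating at any putative common zero $x \in \Kbar^d$ would give $1 = \sum_i h_i(x)\,q_i(x) = 0$, a contradiction; hence the $q_i$ have no common zero in $\Kbar^d$.

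For the forward implication, suppose the $q_i$ have no common zero in $\Kbar^d$. Because $\Kbar$ is algebraically closed, Hilbert's weak Nullstellensatz gives $(q_1,\dots,q_m)\Kbar[X] = \Kbar[X]$, so $1 = \sum_i \tilde h_i q_i$ for some $\tilde h_i \in \Kbar[X]$. The substance of the lemma is that the cofactors can be taken in the base ring $K[X]$; this descent is the heart of the argument, and I would carry it out by linear algebra. Put $N = \max_i \deg \tilde h_i$. The assertion ``there exist $h_i \in K[X]$ with $\deg h_i \le N$ and $\sum_i h_i q_i = 1$'' amounts to a finite system of linear equations over $K$ in the coefficients of the $h_i$; it is solvable over $\Kbar$ (the $\tilde h_i$ witness this), and a linear system with entries in $K$ is solvable over an extension field exactly when it is solvable over $K$, since the ranks of the coefficient matrix and the augmented matrix are unchanged by field extension. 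Hence $h_i \in K[X]$ with $1 = \sum_i h_i q_i$ exist. (Alternatively, one can argue that $\Kbar[X]$ is faithfully flat over $K[X]$, so $(q_1,\dots,q_m)\Kbar[X] \cap K[X] = (q_1,\dots,q_m)K[X]$, whence $1 \in (q_1,\dots,q_m)K[X]$.)

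For the computability claim, assume $K$ is computable and compute a reduced Gr\"obner basis $G$ of $(q_1,\dots,q_m)$ in $K[X]$ with respect to any monomial order. By what was just shown, the $q_i$ have a common zero in $\Kbar^d$ iff $(q_1,\dots,q_m)K[X] \ne K[X]$ iff $G \ne \{1\}$, which is decidable. If $G = \{1\}$, recording the cofactors produced while running Buchberger's algorithm --- writing each element of $G$ as a $K[X]$-combination of the $q_i$ and then $1$ as a $K[X]$-combination of $G$ --- yields explicit $h_i \in K[X]$ with $1 = \sum_i h_i q_i$; alternatively one could appeal to an effective Nullstellensatz degree bound and solve the resulting bounded-degree $K$-linear system by Gaussian elimination. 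The main obstacle is precisely the descent from $\Kbar$ to $K$: guaranteeing that a Nullstellensatz certificate over the base field already exists. The two implications themselves, and the bookkeeping that extracts the $h_i$ from a Gr\"obner basis computation, are routine.
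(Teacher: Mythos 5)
Your proof is correct. Note that the paper itself does not prove this lemma: it is stated with a reference to \cite[Lemma 3.2]{DLMM2008} and a \qed, so there is no in-paper argument to compare against. Your write-up supplies exactly the missing content, and it does so along the standard lines: the easy direction by evaluating the certificate at a putative common zero, the hard direction by the weak Nullstellensatz over $\Kbar$ followed by descent of the cofactors to $K[X]$. Your descent step is sound --- viewing the existence of bounded-degree cofactors as a linear system over $K$ and invoking the invariance of rank (equivalently, of solvability) under field extension is precisely the right tool, and the faithful-flatness alternative you mention is a clean structural restatement of the same fact. This linear-algebra formulation is in fact the mechanism used in the cited reference (the ``NulLA'' approach of De Loera et al.), so you have independently reconstructed essentially the intended argument. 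Your computability discussion is also correct: over a computable field the reduced Gr\"obner basis is computable, membership of $1$ in the ideal is equivalent (by the equivalence just proved) to the absence of a common zero in $\Kbar^d$, and the extended Buchberger bookkeeping returns the $h_i$ explicitly. You correctly identify the descent from $\Kbar$ to $K$ as the one point where the lemma strengthens the textbook weak Nullstellensatz --- the paper itself emphasizes exactly this point in the remark following the lemma.
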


Let us call a sequence of polynomials $h_i$ satisfying the equation above a \hl{Nullstellensatz certificate} for the $q_i$.
Note that in contrast to the usual weak Nullstellensatz, here the polynomials $h_i$ are in $K[X]$ and not just in $\Kbar[X]$.

Some examples of computable fields are finite fields, $\QQ$, finite degree extensions of $\QQ$, and $\QQbar$.

%
%
%

Applying Lemma~\ref{null-cert} we get the following lemma \cite[Lemma 3]{Lein1978}.

\begin{lemma}[Nullstellensatz decomposition]\label{null-decomp}
Under the hypotheses of Theorem~\ref{leinartas-decomp},
the rational expression $f$ can be written in the form
    \[
        f = \sum_A \frac{p_A}{\prod_{i \in A} q_i^{e_i}},
    \]
where the $p_A$ are polynomials in $K[X]$ (possibly zero) and the sum is taken over all subsets $A \subseteq \{1,\ldots, m\}$ such that $\cap_{i \in A} V_i \neq \emptyset$.
\end{lemma}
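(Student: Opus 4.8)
The plan is to induct on the number $m$ of distinct irreducible factors of $q$, repeatedly splitting off factors whose varieties miss the current "common zero set" of the denominator. Let me think about the structure.

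We want to write $f = p/(q_1^{e_1}\cdots q_m^{e_m})$ as a sum over subsets $A$ with $\cap_{i\in A} V_i \neq \emptyset$. The key observation: if the *full* set $\{q_1,\ldots,q_m\}$ has a common zero, we're already in the right form with $A = \{1,\ldots,m\}$. Otherwise, by Lemma~\ref{null-cert} there's a Nullstellensatz certificate $1 = \sum_{i=1}^m h_i q_i$. Multiply $f$ by $1$ in this form:
$$f = \frac{p}{\prod_i q_i^{e_i}} = \sum_{i=1}^m \frac{p\, h_i q_i}{\prod_j q_j^{e_j}} = \sum_{i=1}^m \frac{p\, h_i}{\prod_{j\neq i} q_j^{e_j} \cdot q_i^{e_i - 1}}.$$
Each summand now has a denominator using a *proper subset* of the factors (the exponent of $q_i$ dropped by one; if $e_i = 1$ it disappears entirely — but even if $e_i > 1$, the denominator is supported on at most $m$ factors, so I need to be careful that the induction is really on something that decreases).

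So the right approach: think of each rational expression as indexed by the *set* $S$ of factors appearing in its denominator (with arbitrary positive exponents). If $\cap_{i\in S} V_i = \emptyset$, apply the certificate $1 = \sum_{i\in S} h_i q_i$ to rewrite $p/\prod_{i\in S} q_i^{c_i}$ as a sum of terms each with denominator supported on $S\setminus\{i\}$ for various $i$ (the $q_i$ factor cancels one power). Since each such step strictly shrinks the support set, and the empty set has $\cap_{i\in\emptyset} V_i = \Kbar^d \neq \emptyset$ (a polynomial summand, denominator $1$), the process terminates. Formally: induct on $\max$ size of a support set $S$ appearing with $\cap_{i\in S} V_i = \emptyset$, or just do strong induction on the size of the single support set $\{1,\ldots,m\}$ and observe that at each stage the "bad" sets that remain are strictly smaller.

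The main obstacle — really a bookkeeping point rather than a deep one — is handling the exponents cleanly: when $e_i > 1$, applying the certificate once only lowers $q_i$'s exponent to $e_i-1$ rather than removing it, so a single pass does not reduce the support. The fix is to iterate the certificate for the set $S$ enough times (at least $\max_i c_i$ times, or just note that repeatedly substituting eventually clears every factor $q_i$, $i\in S$, since each application lowers *some* exponent on a denominator still supported on all of $S$... actually cleaner: apply the certificate, then recursively treat any resulting term still supported on all of $S$). One clean formulation: raise the certificate to a high power, $1 = 1^N = \big(\sum_{i\in S} h_i q_i\big)^N$ with $N = \sum_{i\in S}(c_i)$, expand multinomially; every monomial $\prod h_i^{a_i} q_i^{a_i}$ with $\sum a_i = N$ has some $a_i \ge c_i$, so multiplying $p/\prod q_i^{c_i}$ by that monomial yields a term whose denominator omits $q_i$. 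That disposes of the exponent issue in one stroke. After the rewrite, collect terms by support set; the claim is an immediate induction. I should also note the exponents in the final answer can be read off as $e_i$ for $i$ in the surviving sets $A$ — this holds because in the term with $A = \{1,\ldots,m\}$ (if $\cap V_i \neq\emptyset$) nothing was touched, and terms produced along the way that happen to have support $A$ with $\cap_{i\in A}V_i\neq\emptyset$ can be forced to have denominator exactly $\prod_{i\in A} q_i^{e_i}$ by multiplying numerator and denominator by appropriate powers of the $q_i$ (harmless, since only *lower* powers ever appear). This last normalization step is the one place to state explicitly but it is routine.
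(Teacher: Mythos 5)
Your proposal is correct and follows essentially the same route as the paper: if the full intersection is nonempty you are done, and otherwise a Nullstellensatz certificate writes $1$ as a combination of the denominator's factors, which is multiplied into $f$ so that each resulting term omits a factor, and the process is iterated on strictly smaller support sets (terminating since each $V_i\neq\emptyset$). The one place you work harder than necessary is the exponent issue: the paper sidesteps it by applying Lemma~\ref{null-cert} directly to the powers $q_1^{e_1},\ldots,q_m^{e_m}$ (which have the same varieties as the $q_i$, hence no common zero), so each application cancels a full power $q_i^{e_i}$ in one step, with no need to raise the certificate for the $q_i$ to the power $N=\sum_i c_i$ and expand multinomially.
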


\begin{proof}
If $\cap_{i=1}^m V_i \neq \emptyset$, then the result holds.

Suppose now that $\cap_{i=1}^m V_i = \emptyset$.
Then the polynomials $q_i^{e_i}$ have no common zero in $\Kbar^d$. 
So by Lemma~\ref{null-cert}
\[
    1 = h_1 q_1^{e_1} + \cdots + h_m q_m^{e_m}
\]
for some polynomials $h_i$ in $K[X]$.
Multiplying both sides of the equation by $p/q$ yields
\begin{align*}
    f 
    &= 
    \frac{p (h_1 q_1^{e_1} + \cdots + h_m q_m^{e_m})}{q_1^{e_1} \cdots  
    q_m^{e_m}} \\
    &=
    \sum_{i=1}^m \frac{p h_i}{q_1^{e_1} \cdots \widehat{q_i^{e_i}} \cdots 
    q_m^{e_m}}
\end{align*}
Note that $p h_i \in K[X]$.

Next we check each summand $p h_i/(q_1^{e_1} \cdots \widehat{q_i^{e_i}} \cdots q_m^{e_m})$ to see whether $\cap_{j \neq i } V_j \neq \emptyset$.
If so, then stop.
If not, then apply Lemma~\ref{null-cert} to ${q_1^{e_1}, \ldots \widehat{q_i^{e_i}}, \ldots q_m^{e_m}}$.

Repeating this procedure until it stops yields the desired result.
The procedure must stop, because each $V_i \neq \emptyset$ since each $q_i$ is irreducible in $K[X]$ and hence not a unit in $K[X]$.
\end{proof}

Let us call a decomposition of the form above a \hl{Nullstellensatz decomposition}.

\begin{example}
%
%
Consider the rational expression
\[
    f := \frac{X^2 Y + X Y^2 + X Y + X + Y}{X Y (X Y + 1)} 
\]
in $\QQ(X,Y)$.
Let $p$ denote the numerator of $f$.
The irreducible polynomials $X, Y, XY + 1 \in \QQ[X, Y]$ in the denominator have no common zero in $\QQbar^2$.
So they have a Nullstellensatz certificate, e.g. $(-Y, 0, 1)$: 
\[
    1 = (-Y)X + (0)X + (1)(XY + 1). 
\]

Applying the algorithm in the proof of Lemma~\ref{null-decomp} gives us a Nullstellensatz decomposition for $f$ in one iteration:
\begin{align*}
    f 
    =& \frac{p(-Y)}{Y(XY + 1)} + \frac{p(1)}{XY} \\
    =& \frac{-p}{XY + 1} + \frac{p}{XY} \\
    =& -X -Y -1 + \frac{1}{X Y + 1} + X + Y + 1 + \frac{X + Y}{XY} \\
     & \text{(after applying the division algorithm)} \\
    =& \frac{1}{X Y + 1} + \frac{X + Y}{XY}.
\end{align*}
Notice that 
\[
    f = \frac{1}{X} + \frac{1}{Y} + \frac{1}{XY + 1}
\]
is also a Nullstellensatz decomposition for $f$.
So Nullstellensatz decompositions are not unique.
\end{example}

The next lemma is a classic in computational commutative algebra; see e.g. \cite{Kaya2009}.

\begin{lemma}[Algebraic dependence certificate]\label{algdep-cert}
Any set $S$ of polynomials in $K[X]$ of size $> d$ is algebraically dependent.
Moreover, if $K$ is a computable field and $S$ is finite, then there is a computable procedure that checks whether or not $S$ is algebraically dependent and, if so, returns an annihilating polynomial over $K$ for $S$.     
\qed
\end{lemma}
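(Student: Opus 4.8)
The plan is to treat the two assertions separately: prove algebraic dependence for $|S| > d$ by a transcendence‑degree (or dimension‑counting) argument, and obtain the decision procedure by an elimination‑ideal computation.

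For the first assertion, write $S = \{f_1, \ldots, f_n\}$ with $n > d$. The fraction field $K(X_1, \ldots, X_d)$ has transcendence degree exactly $d$ over $K$, so any $n > d$ of its elements are algebraically dependent over $K$; in particular there is a nonzero $A \in K[T_1, \ldots, T_n]$ with $A(f_1, \ldots, f_n) = 0$. If one prefers a self‑contained argument, set $D = \max_i \deg f_i$ and compare, for growing $N$, the number $\binom{N+n}{n}$ of exponent vectors $\alpha \in \NN^n$ with $|\alpha| \le N$ against the $K$‑dimension of the span of $\{ f_1^{\alpha_1} \cdots f_n^{\alpha_n} : |\alpha| \le N \}$, which is at most $\binom{ND+d}{d}$, the dimension of the space of polynomials in $K[X]$ of degree $\le ND$. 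Since $n > d$, the former exceeds the latter for all large $N$, forcing a nontrivial $K$‑linear relation among these products, i.e.\ a nonzero annihilating polynomial.

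For the ``moreover'' part, take any finite $S = \{f_1, \ldots, f_n\}$, introduce fresh indeterminates $T_1, \ldots, T_n$, and in $R = K[X_1, \ldots, X_d, T_1, \ldots, T_n]$ form the ideal $I = (T_1 - f_1, \ldots, T_n - f_n)$ and the elimination ideal $J = I \cap K[T_1, \ldots, T_n]$. The crux is to show that $J$ is exactly the ideal of annihilating polynomials, namely $J = \{ A \in K[T] : A(f_1, \ldots, f_n) = 0 \}$. Let $\Phi \colon R \to K[X]$ be the $K$‑algebra homomorphism fixing each $X_j$ and sending $T_i \mapsto f_i$; then $\Phi(I) = 0$, so $J \subseteq \ker(\Phi|_{K[T]})$, and $\Phi|_{K[T]}$ is just $A \mapsto A(f)$. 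Conversely, for $A \in K[T]$ the telescoping identity
\[
    A(T) - A(f) = \sum_{i=1}^n \bigl( A(f_1, \ldots, f_{i-1}, T_i, \ldots, T_n) - A(f_1, \ldots, f_i, T_{i+1}, \ldots, T_n) \bigr)
\]
exhibits $A(T) - A(f)$ as an element of $I$, so if $A(f) = 0$ then $A \in I \cap K[T] = J$. Granting this, $S$ is algebraically dependent iff $J \neq 0$, and any nonzero element of $J$ is an annihilating polynomial over $K$. The algorithm is then immediate: over the computable field $K$, run Buchberger's algorithm (which needs only field arithmetic) to get a Gröbner basis $G$ of $I$ with respect to an elimination order eliminating $X_1, \ldots, X_d$; by the elimination theorem $G \cap K[T]$ is a Gröbner basis of $J$, so one reports ``algebraically independent'' if it is empty and otherwise returns any of its elements.

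I expect the only real obstacle to be the identity $J = \ker(\Phi|_{K[T]})$ — getting the telescoping expansion right and invoking the elimination theorem correctly — since everything else is standard bookkeeping. A secondary point worth stating carefully is that this procedure is correct for \emph{every} finite $S$, not just those with $|S| > d$, and that when $|S| \le d$ it may legitimately report algebraic independence.
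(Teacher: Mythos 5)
Your proof is correct and complete: the transcendence-degree (or dimension-counting) argument for the first assertion and the elimination-ideal computation $J = \langle T_1 - f_1, \ldots, T_n - f_n\rangle \cap K[T]$ via Gr\"obner bases for the second are exactly the standard route that the paper relies on by citation (it states this lemma without proof), and the same elimination ideal reappears in the paper's own proof of Lemma~\ref{algdep-powers}. The telescoping identity establishing $J = \ker(A \mapsto A(f))$ is the right key step and is carried out correctly, so nothing needs changing.
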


The next lemma is \cite[Lemma 1]{Lein1978}.
 
\begin{lemma}\label{algdep-powers}
A finite set of polynomials $\{q_1, \ldots, q_m\} \subset K[X]$ is algebraically dependent iff for all positive integers $e_1, \ldots, e_m$ the set of polynomials $\{q_1^{e_1}, \ldots, q_m^{e_m}\}$ is algebraically dependent.    
\end{lemma}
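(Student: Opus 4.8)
The plan is to prove the two implications separately; the forward direction carries all the content. For ($\Leftarrow$), I would simply specialize $e_1 = \cdots = e_m = 1$: the hypothesis then reads verbatim that $\{q_1, \ldots, q_m\}$ is algebraically dependent.

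For ($\Rightarrow$), I would translate everything into transcendence degrees. Recall that elements $f_1, \ldots, f_k$ of an extension field of $K$ are algebraically dependent over $K$ exactly when $\operatorname{trdeg}_K K(f_1, \ldots, f_k) < k$ (if two of them coincide the degree is automatically at most $k-1$). So the hypothesis gives $t := \operatorname{trdeg}_K K(q_1, \ldots, q_m) < m$. Now fix positive integers $e_1, \ldots, e_m$ and set $L := K(q_1^{e_1}, \ldots, q_m^{e_m}) \subseteq M := K(q_1, \ldots, q_m)$. Each $q_i$ is a root of the nonzero polynomial $Y^{e_i} - q_i^{e_i} \in L[Y]$, hence is algebraic over $L$; since $M = L(q_1, \ldots, q_m)$, the extension $M/L$ is algebraic. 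Algebraic extensions do not change transcendence degree over the ground field, so $\operatorname{trdeg}_K L = \operatorname{trdeg}_K M = t < m$, which says precisely that $\{q_1^{e_1}, \ldots, q_m^{e_m}\}$ is algebraically dependent over $K$. Note that this argument is insensitive to the characteristic: $Y^{e_i} - q_i^{e_i}$ is a nonzero polynomial regardless of whether $\operatorname{char} K$ divides $e_i$.

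For the algorithmic use one wants not just existence but an explicit annihilating polynomial, and I would obtain this by handling one exponent at a time. Given a nonzero $P \in K[Y_1, \ldots, Y_m]$ with $P(q_1, \ldots, q_m) = 0$, consider $g(T) := P(T, q_2, \ldots, q_m)$ over $F := K(q_2, \ldots, q_m)$. If $g$ is a nonzero polynomial, then $q_1$, and hence every power $q_1^{e_1}$, is algebraic over $F$; writing $1, q_1^{e_1}, q_1^{2e_1}, \ldots$ as $F$-linear combinations of a fixed $F$-basis of $F(q_1)$ produces an $F$-linear relation among them, and clearing denominators and substituting the $q_j$ back for the variables yields an annihilating polynomial in $K[Z_1, Y_2, \ldots, Y_m]$ for $q_1^{e_1}, q_2, \ldots, q_m$. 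If instead $g$ is identically zero, then $q_2, \ldots, q_m$ are already algebraically dependent and one recurses on $m$. Iterating this over the indices $1, 2, \ldots, m$ passes from $\{q_1, \ldots, q_m\}$ to $\{q_1^{e_1}, \ldots, q_m^{e_m}\}$.

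I do not expect a serious obstacle: the forward direction rests only on the two standard facts that "algebraically dependent" is the negation of "transcendence degree equals the cardinality" and that algebraic field extensions preserve transcendence degree over the base. The single point needing care — and the real crux if one insists on the explicit, constructive version — is checking that the polynomial produced by clearing denominators is not the zero polynomial; this holds because the cleared relation is a nonzero element of $F[Z_1]$, and the substitution $Z_1 \mapsto q_1^{e_1}$, $Y_j \mapsto q_j$ carries it to the evaluation $0$ we started from, so it is genuinely a nontrivial annihilator.
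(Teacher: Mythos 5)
Your proof is correct, but it takes a genuinely different route from the paper's. The paper argues via the Jacobian criterion: $\{q_1,\ldots,q_m\}$ is algebraically independent iff the matrix $J(q_1,\ldots,q_m) = \bigl(\partial q_i/\partial X_j\bigr)$ has rank $m$ over $K(X)$, and $J(q_1^{e_1},\ldots,q_m^{e_m})$ is obtained from it by scaling row $i$ by $e_i q_i^{e_i-1}$, which preserves the rank; for the explicit annihilator it computes the elimination ideal $\langle Y_i - q_i,\; Y_i^{e_i}-Z_i\rangle \cap K[Z_1,\ldots,Z_m]$ with Groebner bases. You instead compare transcendence degrees across the algebraic extension $K(q_1^{e_1},\ldots,q_m^{e_m}) \subseteq K(q_1,\ldots,q_m)$, and you build the annihilator by linear algebra in a finite field extension, one exponent at a time. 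What your approach buys is that it is honestly characteristic-free: the Jacobian criterion as an \emph{equivalence} holds only in characteristic zero (over $\mathbb{F}_p$ the single polynomial $X^p$ is algebraically independent yet has vanishing Jacobian), and the paper's row-scaling step multiplies row $i$ by $e_i q_i^{e_i-1}$, which is the zero scalar whenever $\operatorname{char} K$ divides $e_i$ and can then drop the rank. Since the paper allows arbitrary $K$, your argument actually repairs a gap in its proof. What the paper's approach buys is a single uniform Groebner-basis computation for the annihilator, which is what one implements, whereas your index-by-index construction is constructive but more delicate to code. The one point you should spell out is the nontriviality after clearing denominators: at least one cleared coefficient has nonzero evaluation at $(q_2,\ldots,q_m)$ and is therefore a nonzero polynomial in $K[Y_2,\ldots,Y_m]$, so the lifted relation in $K[Z_1,Y_2,\ldots,Y_m]$ is nonzero even though $q_2,\ldots,q_m$ may themselves be algebraically dependent and the lift is not canonical. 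You gesture at exactly this, and it is right.
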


\begin{proof}
A set of polynomials $\{q_1, \ldots, q_m\} \subset K[X]$ is algebraically independent 
iff the $m \times d$ Jacobian matrix $J(q_1, \ldots, q_m) := \left( \frac{\partial q_i}{\partial X_j}\right)$ over the vector space $K(X)^d$ has rank $m$ (by the Jacobian criterion; see e.g. \cite{EhRo1993}) 
iff for all positive integers $e_i$ the matrix $\left(e_i q_i^{e_i -1} \frac{\partial q_i}{\partial X_j}\right) = J(q_1^{e_1}, \ldots, q_m^{e_m})$ over the vector space $K(X)^d$ has rank $m$ (since we are just taking scalar multiples of rows) iff the set of polynomials $q_1^{e_1}, \ldots, q_m^{e_m}$ is algebraically independent (by the Jacobian criterion).

Moreover, if $\{q_1, \ldots, q_m\}$ is algebraically dependent, then any member of the (necessarily nonempty) elimination ideal 
\[
\langle Y_1 - q_1, \ldots, Y_m - q_m, Y_1^{e_1} - Z_1, \ldots, Y_m^{e_m} - Z_m \rangle_{K[X,Y,Z]} \cap K[Z_1, \ldots, Z_m],
\]
is an annihilating polynomial for $q_1^{e_1}, \ldots, q_m^{e_m}$. 
Moreover a finite basis for the elimination ideal can be computed using Groebner bases; see e.g. \cite[Chapter 3]{CLO2007}.
\end{proof}

%
%
%

Applying the previous two lemmas we get our final lemma \cite[Lemma 2]{Lein1978}.

\begin{lemma}[Algebraic dependence decomposition]\label{algdep-decomp}
Under the hypotheses of Theorem~\ref{leinartas-decomp},
the rational expression $f$ can be written in the form
    \[
        f = \sum_A \frac{p_A}{\prod_{i \in A} q_i^{b_i}}, 
    \]
where the $b_i$ are positive integers (possibly greater than the $e_i$), the $p_A$ are polynomials in $K[X]$ (possibly zero), and the sum is taken over all subsets $A \subseteq \{1,\ldots, m\}$ such that $\{q_i : i \in A\}$ is algebraically independent (and necessarily $|A| \le d$).
\end{lemma}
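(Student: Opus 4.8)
The plan is to induct on $m$, the number of distinct irreducible factors of the denominator. If $\{q_1,\ldots,q_m\}$ is algebraically independent --- in particular if $m=0$ --- there is nothing to prove: $f$ already has the required form, with the single index set $A=\{1,\ldots,m\}$ and $b_i=e_i$. So suppose $\{q_1,\ldots,q_m\}$ is algebraically dependent. By Lemma~\ref{algdep-cert} it has an annihilating polynomial over $K$, and then by Lemma~\ref{algdep-powers} (and its proof) so does the set of powers $\{q_1^{e_1},\ldots,q_m^{e_m}\}$: there is a nonzero $F=\sum_{\alpha\in S}c_\alpha Z^\alpha\in K[Z_1,\ldots,Z_m]$ with every $c_\alpha\ne 0$ and $F(q_1^{e_1},\ldots,q_m^{e_m})=0$. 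Since $K[X]$ is a domain and no $q_i$ vanishes, $F$ is not a single monomial, so $|S|\ge 2$; one may also divide $F$ by the greatest common divisor of its monomials, but this will not be needed.

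The crux is to choose the pivot $\alpha^*\in S$ to be minimal for the componentwise partial order on $\NN^m$. With this choice, every $\alpha\in S\setminus\{\alpha^*\}$ has some coordinate $i$ with $\alpha_i>\alpha^*_i$. Dividing the identity $\sum_{\alpha\in S}c_\alpha\prod_i q_i^{\alpha_i e_i}=0$ in $K(X)$ by the nonzero element $c_{\alpha^*}\prod_i q_i^{(\alpha^*_i+1)e_i}$ and isolating the $\alpha=\alpha^*$ term gives
\[
    \frac{1}{\prod_i q_i^{e_i}} \;=\; \frac{-1}{c_{\alpha^*}}\sum_{\alpha\in S\setminus\{\alpha^*\}}c_\alpha\prod_i q_i^{(\alpha_i-\alpha^*_i-1)e_i},
\]
so that, multiplying through by $p$, the expression $f$ becomes a sum over $\alpha\in S\setminus\{\alpha^*\}$ of the terms $\frac{-c_\alpha}{c_{\alpha^*}}\,p\prod_i q_i^{(\alpha_i-\alpha^*_i-1)e_i}$. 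Moving the factors with nonnegative exponent into the numerator, the $\alpha$-th term takes the form $\tilde p_\alpha/\prod_{i\in A_\alpha}q_i^{b_i}$, where $A_\alpha=\{i:\alpha_i\le\alpha^*_i\}$, where $b_i=(\alpha^*_i+1-\alpha_i)e_i$ is a positive integer, and where $\tilde p_\alpha\in K[X]$. By the minimality of $\alpha^*$, each $A_\alpha$ omits at least one index, hence is a proper subset of $\{1,\ldots,m\}$.

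Thus $f$ is exhibited as a finite sum of rational expressions each of which satisfies the hypotheses of the lemma with strictly fewer than $m$ distinct irreducible factors in its denominator --- namely the $q_i$ for $i\in A_\alpha$, still distinct irreducibles of $K[X]$, raised to the positive powers $b_i$. Applying the induction hypothesis to each term, and then grouping the resulting terms that share a denominator index set $A$ and putting each such group over a common denominator, we obtain $f=\sum_A p_A/\prod_{i\in A}q_i^{b_i}$ with each $\{q_i:i\in A\}$ algebraically independent; and by Lemma~\ref{algdep-cert} any such $A$ satisfies $|A|\le d$. The base case (an algebraically independent family, in particular $m=0$) together with the strict decrease of the number of factors at each step guarantees that the recursion terminates.

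The one genuinely clever point is the choice of pivot: taking $\alpha^*$ minimal is exactly what forces every denominator produced by the identity above to drop at least one of the $q_i$, which is what makes the induction close. Everything else --- that the division takes place in the field $K(X)$ and is by a nonzero element, that $|S|\ge 2$, that the $\tilde p_\alpha$ remain polynomials in $K[X]$, and the final regrouping --- is routine bookkeeping.
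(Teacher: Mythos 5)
Your proof is correct and follows essentially the same route as the paper: use an annihilating polynomial for $\{q_1^{e_1},\ldots,q_m^{e_m}\}$ (via Lemma~\ref{algdep-powers}), divide by a pivot term so that every remaining term loses at least one irreducible factor from its denominator, and recurse. Your pivot (componentwise-minimal exponent) slightly generalizes the paper's (smallest total norm, which is automatically componentwise-minimal), and your explicit induction on $m$ just makes the paper's ``repeat until done'' termination argument precise.
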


\begin{proof}
If $\{q_1, \ldots, q_m\}$ is algebraically independent, then the result holds.
Notice that in this case $m \le d$ by Lemma~\ref{algdep-cert}.

Suppose now that $\{q_1, \ldots, q_m\}$ is algebraically dependent.
Then so is $\{q_1^{e_1}, \ldots, q_m^{e_m}\}$ by Lemma~\ref{algdep-powers}.
Let $g = \sum_{\nu \in S} c_\nu Y^\nu \in K[Y_1, \ldots, Y_m]$ be an annihilating polynomial for $\{q_1^{e_1}, \ldots, q_m^{e_m}\}$, where $S \subset \NN^m$ is the set of multi-indices such that $c_\nu \neq 0$.
Choose a multi-index $\alpha \in S$ of smallest norm $||\alpha|| = \alpha_1 + \cdots + \alpha_m$.
Then at $Q:= (q_1^{e_1}, \ldots, q_m^{e_m})$ we have
\begin{align*}
    g(Q) 
    &= 0 \\
    c_\alpha Q^\alpha 
    &= -\sum_{\nu \in S \setminus{\{\alpha\}}} c_\nu Q^\nu \\
    1 
    &= \frac{-\sum_{\nu \in S \setminus{\{\alpha\}}} c_\nu Q^\nu}{c_\alpha Q^\alpha}.
\end{align*}
Multiplying both sides of the last equation by $p / q$ yields
\begin{align*}
    \frac{p}{q} 
    &= \sum_{\nu \in S \setminus{\{\alpha\}}} 
    \frac{-p c_\nu Q^\nu}{c_\alpha Q^{\alpha + 1}} \\
    &= 
    \sum_{\nu \in S \setminus{\{\alpha\}}} \frac{-p c_\nu}{c_\alpha}
    \prod_{i=1}^m \frac{q_i^{e_i \nu_i}}{q_i^{e_i(\alpha_i + 1)}} \\
\end{align*}
Since $\alpha$ has the smallest norm in $S$ it follows that for any $\nu \in S \setminus{\{\alpha\}}$ there exists $i$ such that $\alpha_i + 1 \le \nu_i$, so that $e_i(\alpha_i + 1) \le e_i \nu_i$.
So for each $\nu \in S \setminus{\{\alpha\}}$, some polynomial $q_i^{e_i (\alpha_i + 1)}$ in the denominator of the right side of the last equation cancels.

Repeating this procedure yields the desired result.
\end{proof}

Let us call a decomposition of the form above an \hl{algebraic dependence decomposition}.

\begin{example}
%
%
Consider the rational expression
\[
    f := \frac{(X^2 Y^2 + X^2 Y Z + X Y^2 Z + 2 X Y Z + X Z^2 + Y Z^2)}{X Y Z (X Y + Z)} 
\]
in $\QQ(X,Y,Z)$.
Let $p$ denote the numerator of $f$.
The irreducible polynomials $X, Y, Z, XY + Z \in \QQ[X,Y,Z]$ in the denominator are four in number, which is greater than the number of ring indeterminates, and so they are algebraically dependent.
An annihilating polynomial for them is $g(A,B,C,D) = AB  + C - D$.

Applying the algorithm in the proof of Lemma~\ref{algdep-decomp} gives us an algebraic dependence decomposition for $f$ in one iteration:
\begin{align*}
    f
    =& \sum_{\nu \in S \setminus{\{\alpha\}}} 
       \frac{-p c_\nu Q^\nu}{c_\alpha Q^{\alpha + 1}} \\
     & \text{where $Q = (X,Y,Z,XY + Z)$ and $\alpha = (0,0,0,1)$} \\
    =& \frac{pQ^{(1,1,0,0)}}{Q^{(1,1,1,2)}} + \frac{pQ^{(0,0,1,0)}}{Q^{(1,1,1,2)}} \\
    =& \frac{p}{Q^{(0,0,1,2)}} + \frac{p}{Q^{(1,1,0,2)}} \\
    =& \frac{p}{Z (XY + Z)^2} + \frac{p}{XY(XY + Z)^2}.
\end{align*}

Notice that in this example the exponent 2 of the irreducible factor $XY + Z$ in the denominators of the decomposition is larger than the exponent 1 of $XY + Z$ in the denominator of $f$.
Notice also that 
\[
    f = \frac{1}{X} + \frac{1}{Y} + \frac{1}{Z} + \frac{1}{XY + Z}
\]
is also an algebraic dependence decomposition for $f$.
So algebraic dependence decompositions are not unique.
\end{example}

Finally, here is \Leinartass algorithm.

\begin{proof}[Proof of Theorem~\ref{leinartas-decomp}]
First find the irreducible factorization of $q$ in $K[X]$.
This is a computable procedure if $K$ is computable.
Then decompose $f$ via Lemma~\ref{null-decomp}.
Finally decompose each summand of the result via Lemma~\ref{algdep-decomp}.     
As highlighted above, the last two steps are computable if $K$ is.
\end{proof}

\begin{example}
%
%
%
Consider the rational expression
\[
    f := \frac{2X^2 Y + 4X Y^2 + Y^3 - X^2 - 3 X Y - Y^2}{X Y (X + Y) (Y - 1)}
\]
in $\QQ(X,Y)$.
Computing a Nullstellensatz decomposition according to the proof of Lemma~\ref{null-decomp} with Nullstellensatz combination $1 = 0(X) + 1(Y) + 0(X + Y) -1(Y - 1)$ yields
\begin{align*}
    f =& X - Y + \frac{Y^3 + X^2 - Y^2 + X}{X(Y-1)} + 
         \frac{X^2 Y - 2X^2 - XY}{(X + Y)(Y - 1)} +\\
       & \frac{-2X^3 - Y^3 - 2X^2 + Y^2}{X(X + Y)} + 
         \frac{2X^2 Y - Y^3 + X^2 + 3X Y + Y^2}{XY(X + Y)}.
\end{align*}

Computing an algebraic dependence decomposition for the last term according to the proof of Lemma~\ref{algdep-decomp} with annihilating polynomial $g(A,B,C) = A + B - C$ for $(X, Y, X + Y)$ yields
\begin{align*}
    & \frac{2X^2 Y - Y^3 + X^2 + 3X Y + Y^2}{XY(X + Y)} \\
    &= 1 + \frac{2X^2 Y - Y^3 + X^2 + 3X Y + Y^2}{XY^2} + 
      \frac{-2X^2 Y - XY^2 - X^2 - 3XY - Y^2}{Y^2 (X + Y)}.
\end{align*}

The two equalities taken together give us a \Leinartas decomposition for $f$.

Notice that
\[
    f =  \frac{1}{X} + \frac{1}{Y} + \frac{1}{X + Y} + \frac{1}{Y - 1}
\]
is also a \Leinartas decomposition of $f$.
So \Leinartas decompositions are not unique.
\end{example}

\begin{remark}
In case $d=1$, \Leinartas decompositions are unique once the fractions are written in lowest terms (and one disregards summand order).
To see this, note that a \Leinartas decomposition of a univariate rational expression $f = p/q$ must have fractions all of the form $p_i/q_i^{e_i}$, where $q = q_1^{e_1} \cdots q_m^{e_m}$ is the unique factorization of $q$ in $K[X]$.
This is because two or more univariate polynomials are algebraically dependent (by Lemma~\ref{algdep-cert}).
Assume without loss of generality here that $\deg(p) < \deg(q)$.
It follows that if we have two \Leinartass decompositions of $p/q$, then we can write them in the form $a_1/q' + a_2/q'' = b_1/q' + b_2/q''$, where $q = q'q''$ with $q'$ and $q''$ coprime, $\deg(a_1), \deg(b_1) < \deg(q')$, and $\deg(a_2), \deg(b_2) < \deg(q'')$.
Multiplying the equality by $q$ we get $a_1q'' + a_2q' = b_1q'' + b_2q'$. 
So $a_1 \equiv b_1 \pmod{q'}$ and $a_2 \equiv b_2 \pmod{q''}$.
Thus $a_1 = b_1$ and $a_2 = b_2$.
This observation used inductively demonstrates uniqueness.

This argument fails in case $d \ge 2$, because then a \Leinartas decomposition might not have fractions all of the form $p_i/q_i^{e_i}$.
\end{remark}	

\begin{remark}
A rational expression already with $\cap_{i=1}^m V_i \neq \emptyset$ and $\{q_1, \ldots, q_m\}$ algebraically independent, can not necessarily be decomposed further into partial fractions.
For example,
\[
	f = \frac{1}{X_1 X_2 \cdots X_m} \in K(X_1, X_2, \ldots, X_d),
\] 	
with $m \le d$ can not equal a sum of rational expressions whose denominators each contain fewer than $m$ of the $X_i$.
Otherwise, multiplying the equation by $X_1 X_2 \cdots X_m$ would yield 
\[
	1 = \sum_{i\in B} h_i X_i
\]
for some $h_i \in K[X]$ and some nonempty subset $B\subseteq \{1, 2, \ldots, m\}$, a contradiction to Lemma~\ref{null-cert} since $\{X_i : i\in B\}$ have a common zero in $\Kbar^d$, namely the zero tuple.
\end{remark}
%
%
%
%
%
%
%
%
%
%
\bibliographystyle{amsalpha}
\bibliography{combinatorics}
\end{document}